\theoremstyle{definition}
\newtheorem{thm}{Theorem}[section]
\newtheorem{lem}[thm]{Lemma}
\newtheorem*{defin}{Definition}
\newtheorem*{rem}{Remark}
\begin{document}

\title[Poincar\'e Series of Non-divisorial valuations]{Poincar\'e series of non-divisorial valuations on two-dimensional function fields}

\author{Charles Li}
\author{Hans Schoutens}

\address{Department of Mathematics and Computer Sciences, Mercy College, 555 Broadway, Dobbs Ferry, NY 10522, USA}
\email{cli2@mercy.edu}

\address{Department of Mathematics, 365 5th Avenue, The CUNY Graduate Center, New York, NY 10016, USA}
\email{hschoutens@citytech.cuny.edu}

\date{}

\maketitle

\begin{abstract}
We use dual graphs and generating sequences of valuations to compute the Poincar\'e series of non-divisorial valuations on function fields of dimension two. The Poincar\'e series are shown to reflect data from the dual graphs and hence carry equivalent information. This extends Galindo's earlier results on the Poincar\'e series of divisorial valuations and consequently offers an alternative classification of valuations via Poincar\'e series.
\end{abstract}

\section{Introduction}

This paper is a follow-up to the results in \cite{li}. The analysis of dual graphs and generating sequences of valuations of function fields of dimension two in the previous paper leads to explicit computations for the Poincar\'e series of non-divisorial valuations. Taken together with the Poincar\'e series computations for divisorial valuations in \cite{gal}, Poincar\'e series thus offer an alternative classification of valuations, the main purpose of this paper.

This paper resulted from investigating the ``average of a valuation,'' a concept introduced in unpublished notes by the second author, who was the first author's thesis advisor. The average of a valuation turned out to be equivalent to the Poincar\'e series of a valuation, but the investigation led down a fertile path, ultimately bearing fruit in this paper and its predecessor.

Valuations on function fields of dimension two have been studied via their generating sequences, dual graphs and Poincar\'e series. It is believed by many who work with valuations that these perspectives are equivalent to some degree. This paper and its predecessor aims to develop such connections between these various ways of working with valuations, and to extend the classification of valuations on function fields of dimension two.

The value semigroup was first used to define the Poincar\'e series for a divisorial valuation centered on a local domain in \cite{gal}. Given Hironaka's landmark work on the resolution of singularities after a finite sequence of blowups, it made sense to look at the Poincar\'e series of divisorial valuations and the multi-graded Poincar\'e series of a finite set of divisorial valuations. Some papers in this direction for surfaces are \cite{cg}, \cite{chr} and \cite{dgz}. 

We are interested in computing the Poincar\'e series for the non-divisorial cases. In particular, after Zariski's work on the classification of valuations, it is well-known that valuations on function fields of surfaces can be sorted into five types using Abhyankar's inequality. Divisorial valuations are the most important and widely used of the five cases. However, we wish to fill a void in the literature by dealing with the Poincar\'e series in the remaining four cases. Poincar\'e series would then give an alternative classification of valuations.

\section{classification of valuations} \label{sectcv}

Abhyankar's inequality can be used  to classify the valuations on function fields of dimension 2 according to the classical invariants: rank, rational rank $rr$ and dimension $d$, i.e. the transcendence degree $d$ of the residue field over the base field. In this setting, Abhyankar's inequality states $rr+d\leq 2$.

\begin{thm} (Classification of Valuations)
\\
Let $(R,\mathfrak{m})$ be a 2-dimensional regular local ring whose fraction field $K$ is a function field of dimension 2 over an algebraically closed field $k$ of characteristic 0. Let $\nu$ be a valuation on $K$ centered on $R$. Let $(V,\mathfrak{m}_V)$ be the valuation ring and its maximal ideal,  hence $\text{Frac}(V)=K=\text{Frac}(R)$. Then $\nu$ is one of the following cases:
\begin{equation*}
\begin{array}{|c|c|c|c|c|c|} \hline
\text{Rank} & \text{rr} & d & \text{Discreteness} & \text{Value group} & \text{Type} \\ \hline
1 & 1 & 1 & \text{discrete} & \mathbb{Z} & 0 \\ \hline
1 & 1 & 0 & \text{non-discrete} & \text{additive subgroup of}\left.\right. \mathbb{Q}& 1 \\ \hline
1 & 2 & 0 & \text{non-discrete} & \mathbb{Z} + \mathbb{Z} \tau, \left.\right.\text{where} \left. \tau \right. \text{is irrational} & 2 \\ \hline 
2 & 2 & 0 & \text{discrete} & \mathbb{Z}^{2} & 3 \left.\right.\text{and} \left.\right. 4.2\\ \hline
1 & 1 & 0 & \text{discrete} & \mathbb{Z} & 4.1 \\ \hline
\end{array}
\end{equation*}
where discreteness refers to the discrete or non-discrete nature of the value groups, and where the value groups are given up to order isomorphism.
\end{thm}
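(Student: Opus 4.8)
The plan is to read the entire table off two classical constraints on the invariants of $\nu$, followed by a case-by-case identification of the value group. Write $\Gamma = \nu(K^\times)$ for the value group, so that $\mathrm{rank}(\nu)$ is the length of the chain of convex subgroups of $\Gamma$ and $rr(\nu) = \dim_{\mathbb{Q}}(\Gamma \otimes_{\mathbb{Z}} \mathbb{Q})$. Since $\nu$ is centered on $R$ with center $\mathfrak{m}$, it is nontrivial, so $\mathrm{rank}(\nu) \geq 1$. The two constraints I would invoke are the general inequality $\mathrm{rank}(\nu) \leq rr(\nu)$, which follows because each successive quotient in the convex-subgroup filtration $0 = \Gamma_0 \subsetneq \cdots \subsetneq \Gamma_r = \Gamma$ contributes rational rank at least $1$, and Abhyankar's inequality $rr(\nu) + d \leq \mathrm{trdeg}_k K = 2$, where $d = \mathrm{trdeg}_k(k_\nu)$ is the residue transcendence degree.

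First I would enumerate the admissible triples $(\mathrm{rank},\, rr,\, d)$. Since $rr \geq \mathrm{rank} \geq 1$, Abhyankar's inequality forces $d \leq 1$. If $d = 1$, then $rr \leq 1$, hence $rr = \mathrm{rank} = 1$, giving the single triple $(1,1,1)$. If $d = 0$, then $rr \in \{1,2\}$: the subcase $rr = 1$ forces $\mathrm{rank} = 1$, giving $(1,1,0)$, while the subcase $rr = 2$ permits $\mathrm{rank} \in \{1,2\}$, giving $(1,2,0)$ and $(2,2,0)$. This yields exactly four triples, and the triple $(1,1,0)$ will subdivide into two value-group types below, producing the five rows of the table.

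Next I would pin down the value group in each case. For the rank-$1$ rows I would apply H\"older's theorem: every totally ordered abelian group of rank $1$ embeds order-preservingly, uniquely up to scaling, as a subgroup of $(\mathbb{R},+)$. When additionally $rr = 1$, the image lies in a one-dimensional $\mathbb{Q}$-subspace and so is isomorphic to a subgroup of $\mathbb{Q}$; such a subgroup is either cyclic, giving $\Gamma \cong \mathbb{Z}$ (discrete), or noncyclic and hence dense in $\mathbb{R}$ (non-discrete). This is precisely the dichotomy separating Type $4.1$ (discrete) from Type $1$ (non-discrete) within $(1,1,0)$, and it also settles the divisorial row $(1,1,1)$, where equality in Abhyankar's inequality holds. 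For the three equality cases $rr + d = 2$, namely $(1,1,1)$, $(1,2,0)$ and $(2,2,0)$, I would invoke Abhyankar's structure theorem that equality forces $\Gamma$ to be finitely generated, hence $\Gamma \cong \mathbb{Z}^{rr}$ as an abstract group. In $(1,1,1)$ this is $\Gamma \cong \mathbb{Z}$ (Type $0$); in $(1,2,0)$ the rank-$1$ order embeds $\mathbb{Z}^2$ into $\mathbb{R}$ as $\mathbb{Z} + \mathbb{Z}\tau$ with $\tau$ irrational, automatically dense and so non-discrete (Type $2$); in $(2,2,0)$ the rank-$2$ ordering presents $\Gamma \cong \mathbb{Z}^2$ with the lexicographic order, which is discrete (Types $3$ and $4.2$).

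The step I expect to be the main obstacle is reconciling the abstract group structure with the order structure in the two $rr = 2$ rows: explaining why the same abelian group $\mathbb{Z}^2$ appears once as a dense rank-$1$ subgroup of $\mathbb{R}$ and once as the lexicographically ordered rank-$2$ group, and why no other order types can occur. Combining H\"older's theorem with the convex-subgroup filtration of $\Gamma$, and checking that a finitely generated rank-$1$ group of rational rank $2$ can only sit inside $\mathbb{R}$ as $\mathbb{Z} + \mathbb{Z}\tau$ after rescaling, is where the genuine content lies; the remaining rows then follow routinely once the enumeration and H\"older's theorem are in hand.
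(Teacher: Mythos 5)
Your proposal is correct, but it is worth noting that the paper itself offers no proof of this theorem at all: the statement is followed only by the remark that details can be found in Cutkosky's book \cite{cut}, the result being the classical Zariski--Abhyankar classification. So what you have written is not an alternative to the paper's argument but a reconstruction of the standard one, and it is a sound reconstruction. The enumeration of triples from $1 \leq \mathrm{rank} \leq rr$ and $rr + d \leq 2$ is exactly right and yields the four triples, with $(1,1,0)$ splitting by discreteness into Types $1$ and $4.1$. Your use of H\"older's theorem for the rank-one rows, and of Abhyankar's theorem that equality $rr + d = \dim R$ forces $\Gamma$ to be finitely generated (this is in the cited \cite{ab}), correctly pins down each value group: a subgroup of $\mathbb{Q}$ in row $(1,1,0)$, the group $\mathbb{Z}+\mathbb{Z}\tau$ dense in $\mathbb{R}$ in row $(1,2,0)$, and lexicographic $\mathbb{Z}^2$ in row $(2,2,0)$ (where the convex filtration has two rank-one, rational-rank-one, finitely generated --- hence cyclic --- subquotients, and the quotient by the convex subgroup is torsion-free, so the extension splits as ordered groups). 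The one thing your argument deliberately does not address, and need not, is the distinction between Types $3$ and $4.2$ within the triple $(2,2,0)$ and between the rows sharing value group $\mathbb{Z}$: those distinctions come from Spivakovsky's dual-graph classification rather than from the classical invariants, which is precisely why the table lists ``$3$ and $4.2$'' in a single row. Your proposal is a legitimate self-contained proof of the statement as given; the paper simply outsources it.
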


More details on the classification with respect to the classical valuation theoretic invariants can be found in \cite{cut}.

The types originate from Spivakovsky's work classifying valuations according to their dual graphs. One difference in notation: we denote Types 4.1 and 4.2 to reflect the rank of the valuation. These two types were originally switched in \cite{spiv}. More details on dual graphs and generating sequences can be found in \cite{li}.

\section{generating sequences} \label{sectgs}

Generating sequences can be used to gain a handle on the value semigroup. In this section, we briefly summarize some of the main points and establish notation.

\begin{defin}
The {\it value semigroup} is:
\begin{equation*}
S:=\left\{\nu(x) \left.|\right. x\in \mathfrak{m} \right\}
\end{equation*}
\end{defin}

\begin{defin}
Let $\{Q_i\}_{i=0}^{g'}$ be a (possibly infinite) sequence of elements of $\mathfrak m$. We say that $\{Q_i\}$ is a {\it generating sequence} for $\nu$ if

\begin{equation*}
S=\left\{\sum_{i} \alpha_i \nu(Q_i) \left|\right. \alpha_i \in \mathbb{N}_0\right\}
\end{equation*}
A {\it minimal generating sequence} is one in which exclusion of any $Q_j$ will cause $\{Q_i\}_{i\neq j}$ to not be a generating sequence. Note that $\mathbb{N}_0$ denotes the set of non-negative integers.
\end{defin}

\begin{rem}
This definition is suitable for our purposes, but is different from the usual definition of generating sequences involving generating the value ideals instead of generating the value semigroup. Using this definition leads to a significant difference in the Type 4.1 case. See \cite{li} for further details.
\end{rem}

The dual graph of a valuation has $g$ dual graph pieces with 2 or more segments each. If there is a tail piece with just one segment at the end of the dual graph, then it is considered the $(g+1)$-th piece. Setting $g'$ as in the following table gives the correct number of elements in a minimal generating sequence for each of the types.

\begin{equation*}
\begin{array}{|c|c|} \hline
\text{Type} & g'\\ \hline
1 & \infty \\ \hline
2 & g \\ \hline
3 & g \\ \hline
4.1 & g \\ \hline
4.2 & g+1 \\ \hline
\end{array}
\end{equation*}

\begin{thm} \label{unique} (Unique representation)
\\
Let $\nu$ be a non-divisorial valuation. Let $\beta_i=\nu(Q_i)$. Let $s\in S$. In the $i$-th dual graph piece, let $a_j^{(i)}$ be the number of vertices in the $j$-th segment, and let $m_i$ be the number of segments. Let the continued fraction $\displaystyle \left[a_1^{(i)},a_2^{(i)},\ldots,a_{m_i}^{(i)},1\right]$ simplify as $\displaystyle \frac{p_i}{q_i}$. Assume $\nu$ is not Type 4.1. We may uniquely write:
\begin{equation*}
\begin{array}{rll}
\displaystyle s=\sum_{i=0}^{g'} \alpha_i \beta_i, & \text{ where } \alpha_0\in\mathbb{N}_0, \alpha_{g'}\in\mathbb{N}_0 \text{ and } 0 \leq \alpha_i \leq q_i-1 \text{ for } 1\leq i < g'
\end{array}
\end{equation*}
If $\nu$ is Type 4.1, then we may uniquely write:
\begin{equation*}
\begin{array}{rll}
\displaystyle s=\sum_{i=0}^{g} \alpha_i \beta_i, & \text{where}\left.\right. \alpha_0\in\mathbb{N}_0, \left.\right.\text{and}\left.\right. 0 \leq \alpha_i \leq q_i-1 \left.\right.\text{for}\left.\right. 1\leq i \leq g
\end{array}
\end{equation*}
\end{thm}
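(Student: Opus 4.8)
The plan is to reduce everything to two ingredients recorded in \cite{li}: first, that the continued-fraction denominator $q_i$ equals the order of $\beta_i$ in the quotient $G_i/G_{i-1}$, where $G_i:=\mathbb{Z}\beta_0+\cdots+\mathbb{Z}\beta_i$, so that $q_i$ is the least positive integer with $q_i\beta_i\in G_{i-1}$; and second, that for each intermediate index there is an actual semigroup relation $q_i\beta_i=\sum_{j<i}c_{ij}\beta_j$ with all $c_{ij}\in\mathbb{N}_0$, accompanied by the growth estimate $\beta_{i+1}>q_i\beta_i$. Granting these, I would prove existence and uniqueness separately, in each case peeling off the top generator $\beta_{g'}$ first and then inducting downward on the remaining generators.

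For existence, start from an arbitrary expression $s=\sum_i\gamma_i\beta_i$ with $\gamma_i\in\mathbb{N}_0$, which is available because $\{Q_i\}$ is a generating sequence. In the non-Type-4.1 cases the top generator $\beta_{g'}$ occupies a direction of the value group independent of $G_{g'-1}$ — the irrational part $\tau$ in Type 2, the second coordinate of $\mathbb{Z}^2$ in Types 3 and 4.2 — so reading off that component forces a single value $\alpha_{g'}=\gamma_{g'}\in\mathbb{N}_0$ and leaves $s-\alpha_{g'}\beta_{g'}\in\langle\beta_0,\dots,\beta_{g'-1}\rangle$. I would then normalize this remainder by one downward sweep: at each index $i$ from $g'-1$ down to $1$, divide $\gamma_i$ by $q_i$ and use the relation to push the quotient onto the strictly lower generators, absorbing the final overflow into the free generator $\beta_0$. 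Because each step only alters lower indices, the sweep terminates after one pass and delivers $0\le\alpha_i\le q_i-1$ for $1\le i<g'$. In Type 4.1 there is no independent top direction; instead $\beta_g$ itself carries a relation, so the same sweep is simply run including $i=g$.

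For uniqueness, subtract two normalized representations to obtain $\sum_i\delta_i\beta_i=0$, with $|\delta_i|\le q_i-1$ for every bounded index and with $\delta_0$ — and, outside Type 4.1, also $\delta_{g'}$ — ranging freely over $\mathbb{Z}$. First fix the top coefficient: in the non-4.1 cases the independent direction of $\beta_{g'}$ immediately gives $\delta_{g'}=0$, while in Type 4.1 there is no free top coefficient to worry about. Now let $N$ be the largest index with $\delta_N\neq0$; then $\delta_N\beta_N=-\sum_{j<N}\delta_j\beta_j\in G_{N-1}$, so the order property forces $q_N\mid\delta_N$, which together with $0<|\delta_N|\le q_N-1$ is a contradiction unless $N=0$. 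Hence all bounded $\delta_i$ vanish, and the leftover relation $\delta_0\beta_0=0$ gives $\delta_0=0$.

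The main obstacle is the treatment of the top generator $\beta_{g'}$: everything hinges on showing that it lies in a direction of the value group genuinely independent of the lower generators, so that its coefficient is simultaneously determined (for existence) and rigid (for uniqueness). This is precisely the step where the classification by value group enters and must be checked type by type, using the irrational slope in Type 2 and the rank-two structure in Types 3 and 4.2. Two loose ends remain: Type 1, where $g'=\infty$, should be handled as the limit case in which every $s\in S$ involves only finitely many generators and no free top generator appears, so only the downward order argument is needed; and the asymmetry of Type 4.1 — a bounded rather than free top coefficient — should be traced back to the modified generating-sequence convention flagged in the Remark. Finally, verifying the identity $q_i=[G_i:G_{i-1}]$ from the continued-fraction data of \cite{li} is what legitimizes both the reduction and the divisibility step.
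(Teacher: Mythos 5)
The paper offers no argument of its own here: its ``proof'' of Theorem \ref{unique} is the single line ``See \cite{li}; the Type 1 case was proved in \cite{ghk}.'' Your sketch is therefore doing strictly more than the paper does, and what you sketch is in substance the standard argument from those references: existence by a downward normalization sweep using the semigroup relations $q_i\beta_i=\sum_{j<i}c_{ij}\beta_j$, and uniqueness by differencing two normal forms and invoking $q_N=[G_N:G_{N-1}]$ to force $q_N\mid\delta_N$, together with the independence of the top generator $\beta_{g'}$ (the irrational component in Type 2, the second lexicographic coordinate in Types 3 and 4.2) to pin down $\delta_{g'}=0$. The logic is sound and the reduction to the two imported facts --- $q_i$ is the order of $\beta_i$ in $G_i/G_{i-1}$, and $q_i\beta_i$ lies in the sub\emph{semigroup} (not merely the subgroup) generated by $\beta_0,\dots,\beta_{i-1}$ --- is exactly the right division of labor, since both are established in \cite{li}. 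Your handling of the two edge cases is also correct: in Type 1 every element of $S$ is a finite sum so only the downward argument is needed, and in Type 4.1 the absence of a free top coefficient traces back to the modified definition of generating sequence flagged in the Remark. The only cosmetic point: the growth estimate $\beta_{i+1}>q_i\beta_i$ that you list among the ingredients is not actually used anywhere in your existence or uniqueness steps and could be dropped.
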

\begin{proof}
See \cite{li}. The Type 1 case of this theorem was proved in \cite{ghk}.
\end{proof}

\section{poincar\'e series} \label{sectps}

A Poincar\'e series can be thought of as a valuation theoretic analogue to Hilbert series which encodes the lengths of values in the value semigroup.

\begin{defin}
Let $I_s:=\{x\in R \left.|\right. \nu(x)\geq s\}$ and $I_s^+:=\{x\in R \left.|\right. \nu(x)> s\}$, where $s\in S$. The {\it associated graded algebra} (over $k$) is: 
\begin{equation*}
\text{gr}_{\nu}(R):=\bigoplus_{s\in S} \frac{I_s}{I_s^+}
\end{equation*}
\end{defin}

Geometric interpretations of the associated graded algebra can be found in \cite{tei}. 

\begin{defin}
The {\it length} of $s$, denoted $l(s)$, is the length of $I_s/I_s^+$ considered as a module over $R/\mathfrak{m}$, i.e. as a $k$-module.
\end{defin}

\begin{defin}
The {\it Poincar\'e series} of $\nu$ is the formal sum in $\displaystyle \mathbb{Z}[ [t^{\Gamma}] ]$:
\begin{equation*}
{\mathcal P}_{\nu}(t)=\sum_{s\in S}l(s)t^{s}
\end{equation*}
\end{defin}

Galindo studied the Poincar\'e series of divisorial valuations and established the following theorem. We will extend this theorem to the non-divisorial valuations in the rest of this paper.

\begin{thm} (Galindo)

In the divisorial case, i.e. $\nu$ has a finite dual graph, the Poincar\'e series is:
\begin{equation*}
\left\{
\begin{array}{ll}
\displaystyle \mathcal P_{\nu}(t)=\frac{1}{1-t^{\beta_0}}\cdot\prod_{j=1}^g\frac{1-t^{q_j\beta_j}}{1-t^{\beta_j}}\cdot\frac{1}{1-t^{\beta_{g+1}}} & \text{if } a_1^{(g+1)} \neq 0,
\\\\
\displaystyle \mathcal P_{\nu}(t)=\frac{1}{1-t^{\beta_0}}\cdot\prod_{j=1}^{g-1}\frac{1-t^{q_j\beta_j}}{1-t^{\beta_j}} \cdot\frac{1}{1-t^{\beta_{g}}} & \text{if } a_1^{(g+1)}=0
\end{array}
\right.
\end{equation*}
\end{thm}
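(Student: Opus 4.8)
The plan is to reduce the computation of $\mathcal P_\nu(t)$ to a counting problem and then factor the resulting generating function. First I would record the divisorial analogue of Theorem \ref{unique}. Since the dual graph is finite, the minimal generating sequence is $Q_0,\dots,Q_{g+1}$, where $Q_{g+1}$ is the tail generator when $a_1^{(g+1)}\neq 0$ and is absent otherwise, and every $s\in S$ admits a representation
\begin{equation*}
s=\sum_i \alpha_i\beta_i,\qquad \alpha_0,\alpha_{g+1}\in\mathbb{N}_0,\quad 0\le \alpha_j\le q_j-1\ (1\le j\le g).
\end{equation*}
The decisive structural difference from the cases in Theorem \ref{unique} is that here the value group has rank $1$ while there are \emph{two} unbounded coefficients $\alpha_0$ and $\alpha_{g+1}$; hence such a representation is no longer unique but only finite-to-one, the freedom being the single relation $c_0\beta_0=c_{g+1}\beta_{g+1}$ between the two free generators (where $c_0,c_{g+1}$ are the least positive integers realizing it). This multiplicity is precisely what encodes the residue transcendence degree $d=1$ of a divisorial valuation.

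The heart of the argument is the length computation: I claim $l(s)$ equals the number of admissible tuples $(\alpha_i)$ with $\sum_i\alpha_i\beta_i=s$. To see this I would work inside $\mathrm{gr}_\nu(R)$. The leading forms $\mathrm{in}_\nu(Q_0),\dots,\mathrm{in}_\nu(Q_{g+1})$ generate $\mathrm{gr}_\nu(R)$ as a $k$-algebra, and the relations among them are generated by the contact relations $\mathrm{in}_\nu(Q_j)^{q_j}=(\text{monomial in the lower }\mathrm{in}_\nu(Q_i))$ supplied by the continued-fraction data of the $j$-th piece. Rewriting by these relations puts the leading form of any element of $R$ into a $k$-combination of the standard monomials $Q_0^{\alpha_0}\cdots Q_{g+1}^{\alpha_{g+1}}$ obeying the constraints above, so the standard monomials of value $s$ span $I_s/I_s^+$. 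The hard part will be their \emph{linear independence} in each graded piece. Two standard monomials of the same value share their middle exponents and differ only in $(\alpha_0,\alpha_{g+1})$, so independence reduces to showing that the value-zero class $\mathrm{in}_\nu(Q_0)^{c_0}/\mathrm{in}_\nu(Q_{g+1})^{c_{g+1}}$ is transcendental over $k$ in the residue field of $\nu$, equivalently that $\mathrm{in}_\nu(Q_0)^{c_0}$ and $\mathrm{in}_\nu(Q_{g+1})^{c_{g+1}}$ are not $k$-proportional; transcendence then propagates to give independence of the whole one-parameter family. This is where regularity of $R$, the precise generating sequence, and the divisorial hypothesis $d=1$ must be used together — it is also exactly the point at which the non-divisorial cases behave differently, since there $d=0$ forces proportionality and hence $l(s)=1$.

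Granting the count, the series assembles formally, and no further structural input is needed because the admissible tuples range over a product set: each coordinate varies in its own range independently of the others. Thus
\begin{equation*}
\mathcal P_\nu(t)=\sum_{s\in S}l(s)\,t^s=\sum_{(\alpha_i)\ \mathrm{admissible}}t^{\sum_i\alpha_i\beta_i}=\prod_i\Big(\sum_{\alpha_i}t^{\alpha_i\beta_i}\Big).
\end{equation*}
The two unbounded factors contribute $\dfrac{1}{1-t^{\beta_0}}$ and $\dfrac{1}{1-t^{\beta_{g+1}}}$, while each bounded factor contributes the finite geometric sum $\sum_{\alpha_j=0}^{q_j-1}t^{\alpha_j\beta_j}=\dfrac{1-t^{q_j\beta_j}}{1-t^{\beta_j}}$; collecting these gives the first displayed formula. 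When $a_1^{(g+1)}=0$ there is no separate tail generator, so the last piece contributes no continued-fraction factor, the generator $Q_g$ takes over the role of the second free parameter, and the same bookkeeping produces the product over $j=1,\dots,g-1$ together with the free factor $\dfrac{1}{1-t^{\beta_g}}$, which is the second formula. Finally I would check the boundary conventions and the degenerate small-$g$ cases (e.g. the order valuation, where $g=0$ and the formula must reduce to $1/\big((1-t^{\beta_0})(1-t^{\beta_1})\big)$) to confirm the indexing is consistent.
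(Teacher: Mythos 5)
The paper does not actually prove this statement; it is quoted verbatim from Galindo and the ``proof'' is the citation to Theorem 1 of \cite{gal}. So the comparison here is between your plan and Galindo's published argument rather than anything in the present text. Your outline matches the structure of that argument: the divisorial analogue of Theorem \ref{unique} (existence of representations with $\alpha_0,\alpha_{g+1}$ free and $0\le\alpha_j\le q_j-1$ in between), the identification of $l(s)$ with the number of admissible tuples, and the formal factorization over the product set of exponents, including the correct bookkeeping in the degenerate case $a_1^{(g+1)}=0$. You also put your finger on exactly the right contrast with the rest of the paper: in the non-divisorial cases $d=0$ forces $l(s)=1$ (Lemma \ref{length}), whereas here $d=1$ is what makes the fiber of the representation map, governed by the single relation $c_0\beta_0=c_{g+1}\beta_{g+1}$, contribute multiplicities. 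The one caveat is that the step you yourself flag as ``the hard part'' --- that the standard monomials $\mathrm{in}_\nu(Q_0)^{\alpha_0}\cdots\mathrm{in}_\nu(Q_{g+1})^{\alpha_{g+1}}$ are linearly independent in $I_s/I_s^+$, equivalently that $\mathrm{in}_\nu(Q_0)^{c_0}/\mathrm{in}_\nu(Q_{g+1})^{c_{g+1}}$ is transcendental over $k$ in the residue field --- is not an afterthought but is essentially the entire content of the theorem; it rests on the structure theory of $\mathrm{gr}_\nu(R)$ for a generating sequence (Spivakovsky's description of the graded algebra as a quotient by the binomial contact relations, with no further relations). As a plan your proposal is sound and correctly locates where every hypothesis is used, but it defers rather than supplies the decisive independence argument, so on its own it is a correct reduction plus a citation-shaped hole where Galindo's Theorem 1 sits.
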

\begin{proof}
See Theorem 1 in \cite{gal}.
\end{proof}

At this point, we need one more lemma to compute the Poincar\'e series for the remaining non-divisorial cases.

\begin{lem} \label{length}
Let $\nu$ be a non-divisorial valuation. The lengths $l(s)=1$ for all $s\in S$.
\end{lem}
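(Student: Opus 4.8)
The plan is to show that $l(s)=1$ for every $s\in S$ by proving that the graded piece $I_s/I_s^+$ is one-dimensional over $k$. The key structural input is the unique representation result, Theorem~\ref{unique}: every $s\in S$ can be written uniquely as $s=\sum_i \alpha_i\beta_i$ with the $\alpha_i$ constrained to the prescribed ranges. I would first fix $s$ and consider the monomial $Q^\alpha:=\prod_i Q_i^{\alpha_i}$ built from this unique representation; by construction $\nu(Q^\alpha)=s$, so $Q^\alpha\in I_s$ and its image in $I_s/I_s^+$ is nonzero. This exhibits at least one nonzero class, so it remains to show every element of $I_s$ is, modulo $I_s^+$, a $k$-multiple of this one class.

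Let me sketch the upper bound, which is the substantive direction. Take any $x\in I_s$; I may assume $\nu(x)=s$ exactly, since otherwise $x\in I_s^+$ and its class is zero. I want to produce a scalar $c\in k$ with $\nu(x-cQ^\alpha)>s$. The natural mechanism is the \emph{initial form}: in the associated graded algebra, multiplication of the initial forms of the $Q_i$ reflects addition of their values, so the degree-$s$ component of $\mathrm{gr}_\nu(R)$ is spanned by initial forms of value-$s$ monomials in the $Q_i$. The uniqueness half of Theorem~\ref{unique} guarantees that $Q^\alpha$ is, up to value-raising corrections, the \emph{only} such monomial: any other product of the $Q_i$ with the same value either reduces to $Q^\alpha$ via the continued-fraction relations (the relations that let one rewrite $q_i\beta_i$ in terms of later generators) or has strictly larger value. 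Thus the degree-$s$ piece is at most one-dimensional, forcing $l(s)\le 1$.

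The main obstacle I anticipate is establishing that the chosen scalar $c$ always exists, i.e.\ that the leading coefficient of $x$ can genuinely be matched by a multiple of $Q^\alpha$. This is where the hypothesis $k=\bar k$ and the non-divisorial assumption enter: the residue field of $V$ equals $k$ (because $d=0$ in every non-divisorial type of the classification table), so $I_s/I_s^+$ is a quotient of the residue field $V/\mathfrak m_V=k$, hence automatically at most one-dimensional over $k$. This is in fact the cleaner route, and I would lead with it: one checks that the map sending $x\mapsto$ its image in $V/\mathfrak m_V$ identifies $I_s/I_s^+$ with a $k$-subspace of the residue field, and since that residue field is exactly $k$ when $d=0$, the length is forced to be $1$ (it cannot be $0$ because $s\in S$ means $I_s\neq I_s^+$).

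This contrasts sharply with the divisorial Type~$0$ case, where $d=1$ and the residue field has transcendence degree one over $k$, allowing $l(s)>1$; that is precisely the phenomenon producing the nontrivial factors $\tfrac{1-t^{q_j\beta_j}}{1-t^{\beta_j}}$ in Galindo's formula. So the conceptual heart of the proof is the observation that \emph{non-divisorial} $\Leftrightarrow d=0\Rightarrow$ residue field is $k\Rightarrow l(s)=1$, with Theorem~\ref{unique} available as the combinatorial backup if a direct argument is preferred.
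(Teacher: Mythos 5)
Your preferred (second) route is essentially the paper's proof: the paper argues contrapositively, taking two representatives $r_1,r_2$ of distinct classes in $I_s/I_s^+$, observing that $\nu(r_1/r_2)=0$ so $r_1/r_2$ has a residue in $V/\mathfrak m_V$, and using $d=0$ plus $k=\bar k$ to conclude $V/\mathfrak m_V\cong k$, which forces the two classes to be proportional. One small correction to your phrasing: the map $x\mapsto$ ``image of $x$ in $V/\mathfrak m_V$'' is identically zero on $I_s$ when $s>0$; you must first divide by a fixed element $x_0$ with $\nu(x_0)=s$ and send $x\mapsto \overline{x/x_0}$ --- that twisted map is the $k$-linear injection $I_s/I_s^+\hookrightarrow V/\mathfrak m_V$ you want, and it is exactly what the paper's quotient $r_1/r_2$ implements. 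Also be aware that your first route (initial forms of monomials in the $Q_i$) is not justified by Theorem~\ref{unique} alone: the paper's definition of generating sequence only generates the value \emph{semigroup}, not the value ideals, so the claim that every degree-$s$ graded piece is spanned by initial forms of monomials in the $Q_i$ would need the stronger ideal-generating property, which the paper explicitly does not assume. Since you lead with the residue-field argument, the proposal stands, but drop or substantially strengthen the combinatorial route.
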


\begin{proof}
Assume $l(s)>1$ for some $s\in S$, hence $\text{dim}_k(I_s/I_s^+)>1$. We may pick $r_1, r_2\in R$ that are representatives of two different equivalence classes in $I_s/I_s^+$, i.e. such that $\nu(r_1)=s=\nu(r_2)$ and $r_1-r_2\notin I_s^+$. If $r_1/r_2\in k$, then $r_1/r_2\cdot r_2=r_1$ and $r_1\sim r_2$. Hence $\displaystyle r_1 \not \sim r_2$ implies $r_1/r_2 \notin k$. On the other hand, $\nu(r_1/r_2)=0$ so $r_1/r_2\in V/\mathfrak{m}_V$. The residual transcendence degree is 0 in the non-divisorial cases so $V/\mathfrak{m}_V$ is an algebraic extension over $R/\mathfrak{m}\cong k$, hence $V/\mathfrak{m}_V\cong k$ since $k$ is algebraically closed. Thus, we get $r_1/r_2\in k$, a contradiction.
\end{proof}

\begin{thm} \label{poincare}
In the non-divisorial cases, the Poincar\'e series are:

\begin{equation*}
\begin{array}{ll}
\text{Type 1:} &
\displaystyle \mathcal P_{\nu}(t)=\frac{1}{1-t^{\beta_0}}\cdot\prod_{j=1}^{\infty}\frac{1-t^{q_j\beta_j}}{1-t^{\beta_j}}
\\\\
\text{Type 2:} &
\displaystyle \mathcal P_{\nu}(t)=\frac{1}{1-t^{\beta_0}}\cdot\prod_{j=1}^{g-1}\frac{1-t^{q_j\beta_j}}{1-t^{\beta_j}}\cdot\frac{1}{1-t^{\beta_{g}}}
\\\\
\text{Type 3:} &
\displaystyle \mathcal P_{\nu}(t)=\frac{1}{1-t^{\beta_0}}\cdot\prod_{j=1}^{g-1}\frac{1-t^{q_j\beta_j}}{1-t^{\beta_j}}\cdot\frac{1}{1-t^{\beta_{g}}}
\\\\
\text{Type 4.1:} &
\displaystyle \mathcal P_{\nu}(t)=\frac{1}{1-t^{\beta_0}}\cdot\prod_{j=1}^g\frac{1-t^{q_j\beta_j}}{1-t^{\beta_j}}
\end{array}
\end{equation*}
\begin{equation*}
\begin{array}{ll}
\text{Type 4.2:} &
\displaystyle \mathcal P_{\nu}(t)=\frac{1}{1-t^{\beta_0}}\cdot\prod_{j=1}^g\frac{1-t^{q_j\beta_j}}{1-t^{\beta_j}}\cdot\frac{1}{1-t^{\beta_{g+1}}}
\end{array}
\end{equation*}
\end{thm}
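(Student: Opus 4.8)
The plan is to reduce the Poincar\'e series to a pure generating function of the value semigroup and then factor that function through the unique representation theorem. First I would invoke Lemma \ref{length}: since $\nu$ is non-divisorial, $l(s)=1$ for every $s\in S$, so the defining sum collapses to
\[
\mathcal P_{\nu}(t)=\sum_{s\in S}l(s)t^{s}=\sum_{s\in S}t^{s}.
\]
Thus it suffices to treat $S$ as a set and count each element once. This is precisely where the non-divisorial hypothesis does the work: in Galindo's divisorial setting the lengths need not equal $1$, whereas here the residue field is forced to coincide with $k$, so no length weights intervene.

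Next I would apply Theorem \ref{unique}, which furnishes, for each type, a bijection between $S$ and the set of admissible coefficient tuples $(\alpha_0,\alpha_1,\ldots)$ via $s=\sum_i\alpha_i\beta_i$. Because the exponent $\sum_i\alpha_i\beta_i$ is additive in the $\alpha_i$, the distributive law turns the generating function into a product of independent one-variable sums,
\[
\sum_{s\in S}t^{s}=\prod_i\Bigl(\sum_{\alpha_i\text{ admissible}}t^{\alpha_i\beta_i}\Bigr).
\]
An unbounded index (namely $\alpha_0$ in every type, and $\alpha_{g'}$ in every type except $4.1$) contributes the full geometric series $\sum_{\alpha=0}^{\infty}t^{\alpha\beta_i}=\frac{1}{1-t^{\beta_i}}$, while a bounded index $0\le\alpha_i\le q_i-1$ contributes the truncated series $\sum_{\alpha=0}^{q_i-1}t^{\alpha\beta_i}=\frac{1-t^{q_i\beta_i}}{1-t^{\beta_i}}$.

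With the factors identified, each formula is read off the table giving $g'$. For Type $1$ ($g'=\infty$) only $\alpha_0$ is unbounded and all remaining $\alpha_j$ are truncated, yielding the infinite product. For Types $2$ and $3$ ($g'=g$) the endpoints $\alpha_0$ and $\alpha_g$ are both unbounded while $\alpha_1,\ldots,\alpha_{g-1}$ are truncated, producing the middle product together with a trailing $\frac{1}{1-t^{\beta_g}}$. For Type $4.1$ the representation truncates $\alpha_1,\ldots,\alpha_g$ and leaves only $\alpha_0$ free, so no trailing factor appears. For Type $4.2$ ($g'=g+1$) the truncated range runs $1\le i\le g$ and the endpoint $\alpha_{g+1}$ is again unbounded. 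In every case the resulting product matches the claimed expression verbatim.

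The step requiring the most care is justifying the passage from the sum over $S$ to the product, together with the formal meaning of the infinite product in Type $1$. The bijection of Theorem \ref{unique} is exactly what legitimizes this: distinct admissible tuples yield distinct exponents, so upon expanding the product each $t^{s}$ occurs with coefficient exactly $1$ (consistent with $l(s)=1$), and every $s\in S$ is attained, so no term is dropped. For Type $1$ I would then observe that the product is a well-defined element of $\mathbb{Z}[[t^{\Gamma}]]$ because the values $\beta_j$ increase without bound; consequently each fixed exponent receives contributions from only finitely many factors, the coefficient of each $t^{s}$ stabilizes, and the formal infinite product indeed equals the termwise sum $\sum_{s\in S}t^{s}$.
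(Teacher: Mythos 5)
Your proposal is correct and follows essentially the same argument as the paper: invoke Lemma \ref{length} to reduce to $\sum_{s\in S}t^s$, then use the unique representation of Theorem \ref{unique} to factor the sum into geometric series (full for the unbounded indices, truncated for the bounded ones) in each of the five cases. Your added justification of the formal convergence of the infinite product in Type 1 is a welcome extra detail that the paper leaves implicit, but the route is the same.
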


\begin{proof}
The lengths are all 1 by Lemma \ref{length}. We may uniquely represent the values in $S$ in terms of $\{\beta_i\}$ by Theorem \ref{unique}. For Type 1 valuations:
\begin{equation*}
\begin{array}{rl}
\displaystyle \mathcal{P}_{\nu}(t) & \displaystyle =\sum_{s\in S}l(s)t^{s}
\\\\
& \displaystyle =\sum_{\alpha_0,\ldots,\alpha_{i},\ldots} t^{\sum_i \alpha_i \beta_i}
\\\\
& \displaystyle =\left(\sum_{\alpha_0\in\mathbb{N}_0} t^{\alpha_0\beta_0}\right) \left(\sum_{\alpha_1=0}^{q_1-1} t^{\alpha_1\beta_1}\right) \cdots \left(\sum_{\alpha_i=0}^{q_i-1} t^{\alpha_i\beta_i}\right) \cdots
\\\\
& \displaystyle =\frac{1}{1-t^{\beta_0}}\cdot\prod_{j=1}^{\infty}\frac{1-t^{q_j\beta_j}}{1-t^{\beta_j}}
\end{array}
\end{equation*}
We tackle Types 2, 3 and 4.2 simultaneously:
\begin{equation*}
\begin{array}{l}
\displaystyle \mathcal{P}_{\nu}(t) = \displaystyle \sum_{s\in S}l(s)t^{s}
\\\\
\displaystyle =\sum_{\alpha_0,\ldots,\alpha_{g'}} t^{\sum_i \alpha_i \beta_i}
\\\\
\displaystyle =\left(\sum_{\alpha_0\in\mathbb{N}_0} t^{\alpha_0\beta_0}\right) \left(\sum_{\alpha_1=0}^{q_1-1} t^{\alpha_1\beta_1}\right) \cdots \left(\sum_{\alpha_{g'-1}=0}^{q_{g'-1}-1} t^{\alpha_{g'-1}\beta_{g'-1}}\right) \left(\sum_{\alpha_{g'}\in\mathbb{N}_0} t^{\alpha_{g'}\beta_{g'}}\right)
\end{array}
\end{equation*}
\begin{equation*}
\begin{array}{l}
\displaystyle =\frac{1}{1-t^{\beta_0}}\cdot\prod_{j=1}^{g'-1}\frac{1-t^{q_j\beta_j}}{1-t^{\beta_j}}\cdot\frac{1}{1-t^{\beta_{g'}}}
\end{array}
\end{equation*}
Lastly, for Type 4.1 valuations: 
\begin{equation*}
\begin{array}{rl}
\displaystyle \mathcal{P}_{\nu}(t) & \displaystyle =\sum_{s\in S}l(s)t^{s}
\\\\
& \displaystyle =\sum_{\alpha_0,\ldots,\alpha_{g}} t^{\sum_i \alpha_i \beta_i}
\\\\
& \displaystyle =\left(\sum_{\alpha_0\in\mathbb{N}_0} t^{\alpha_0\beta_0}\right) \left(\sum_{\alpha_1=0}^{q_1-1} t^{\alpha_1\beta_1}\right) \cdots \left(\sum_{\alpha_{g}=0}^{q_{g}-1} t^{\alpha_{g}\beta_{g}}\right)
\\\\
& \displaystyle =\frac{1}{1-t^{\beta_0}}\cdot\prod_{j=1}^{g}\frac{1-t^{q_j\beta_j}}{1-t^{\beta_j}}
\end{array}
\end{equation*}
\end{proof}


\begin{thebibliography}{99}

\bibitem[Abh56]{ab}
S. Abhyankar, {\it On the valuations centered in a local domain}, Amer. J. Math., 78 (1956), 321-348.

\bibitem[CG03]{cg}
A. Campillo, C. Galindo, {\it The Poincar\'e series associated with finitely many monomial valuations}, Math. Proc. Camb. Phil. Soc. 134 (2003), 433-443. 

\bibitem[Cut04]{cut}
S.D. Cutkosky, {\it Resolution of Singularities}, Grad. Stud. Math., vol. 63, Amer. Math. Soc., 2004.

\bibitem[CHR04]{chr}
S.D. Cutkosky, J. Herzog, A. Reguera, {\it Poincar\'e series of resolutions of surface singularities}, Trans. Amer. Math. Soc. 356 (2004), 1833-1874.

\bibitem[DG03]{dgz}
F. Delgado, S.M. Gusein-Zade, {\it Poincar\'e series for several plane divisorial valuations}, Proc. Edinburgh Math. Soc. 46 (2003), 501-509.

\bibitem[FJ04]{fj}
C. Favre, M. Jonsson, {\it The Valuative Tree}, Lect. Notes. Math., vol. 1853, Springer-Verlag, 2004.

\bibitem[Gal95]{gal}
C. Galindo, {\it On the Poincar\'e series for a plane divisorial valuation}, Bull. Belg. Math. Soc. 2 (1995), 65-74

\bibitem[GHK06]{ghk}
L. Ghezzi, H.T. H\`a, O. Kashcheyeva, {\it Toroidalization of generating sequences in dimension two function fields}, J. of Algebra. 301 (2006), 838-866. 


\bibitem[Li]{li}
C. Li, {\it Dual Graphs and Generating Sequences of Non-divisorial Valuations on Two-Dimensional Function Fields} (preprint, \url{http://arxiv.org/abs/1311.1542})


\bibitem[Spi90]{spiv}
M. Spivakovsky, {\it Valuations in Function Fields of Surface}, Amer. J. Math. 112 (1990), 107-156.

\bibitem[Tei03]{tei}
B. Teissier, {\it Valuations, Deformations, and Toric Geometry}, Valuation Theory and its Applications II, 361-459, Fields Inst. Commun., 33, Amer. Math. Soc., 2003.

\bibitem[ZS60]{zs}
O. Zariski, P. Samuel, {\it Commutative Algebra II}, Springer-Verlag, 1960.

\end{thebibliography}
\end{document}